\tikzstyle{vertex}=[circle,draw=black,fill=black,inner sep=0,minimum size=3pt,text=white,font=\footnotesize]
\newtheorem*{rep@theorem}{\rep@title}
\newcommand{\newreptheorem}[2]{%
\newenvironment{rep#1}[1]{%
 \def\rep@title{#2 \ref{##1}}%
 \begin{rep@theorem}}%
 {\end{rep@theorem}}}
\theoremstyle{plain}
\newtheorem{theorem}{Theorem}[section]
\newtheorem{proposition}[theorem]{Proposition}
\theoremstyle{definition}
\newtheorem*{proposition*}{Proposition}
\newcommand\cH{{\mathcal H}}
\newcommand{\ignore}[1]{}
\title{On the number of $A$-transversals in hypergraphs}
\author{János Barát\thanks{Research supported by ERC Advanced Grant ``GeoScape” No. 882971. and the National Research, Development and Innovation Office, grant K-131529.} \\
\small Alfr\'ed R\'enyi Institute of Mathematics, and\\
\small University of Pannonia, Department of Mathematics\\
\small \url{barat@renyi.hu}
\and
 D\'aniel Gerbner\thanks{Research supported by the National Research, Development and Innovation Office - NKFIH under the grants SNN 129364, FK 132060, and KKP-133819.}\\
\small Alfr\'ed R\'enyi Institute of Mathematics\\
\small \url{gerbner@renyi.hu}
\and
Anastasia Halfpap\\
\small University of Montana \\
\small \url{anastasia.halfpap@umontana.edu}} 
\date{}
\begin{document}

\maketitle
\begin{abstract}
A set $S$ of vertices in a hypergraph is \textit{strongly independent} if every hyperedge shares at most one vertex with $S$. We prove a sharp result for the number of maximal strongly independent sets in a $3$-uniform hypergraph analogous to the Moon-Moser theorem.

Given an $r$-uniform hypergraph $\cH$ and a non-empty set $A$ of non-negative integers, we say that a set $S$ is an \textit{$A$-transversal} of $\cH$ if for any hyperedge $H$ of $\cH$, we have \mbox{$|H\cap S| \in A$}. Independent sets are $\{0,1,\dots,r{-}1\}$-transversals, while strongly independent sets are $\{0,1\}$-transversals. Note that for some sets $A$, there may exist hypergraphs without any $A$-transversals. We study the maximum number of $A$-transversals for every $A$, but we focus on the more natural sets, $A=\{a\}$, $A=\{0,1,\dots,a\}$ or $A$ being the set of odd integers or the set of even integers.
\end{abstract}


\section{Introduction}

We use standard notation from (hyper)graph theory. 
In particular, the {\it degree} of a vertex $v$ is the number of (hyper)edges containing $v$.
Recall that a vertex set in a (hyper)graph is {\it independent} if it contains no (hyper)edge.
An independent set is {\it maximal} if it is not a proper subset of a larger independent set.
Let $MIS(G)$ denote the number of maximal independent sets in a graph $G$.
Miller and Muller \cite{mimu} and independently Moon and Moser \cite{momo} showed that for all $n$-vertex graphs $G$, $MIS(G)\le 3^{n/3}$, which is sharp as given by the vertex-disjoint union of triangles.  
More precisely, let $MIS(n)$ denote the largest $MIS(G)$ where $G$ has $n$ vertices. Then the following holds.

\begin{theorem}\label{mm}

\begin{displaymath} 
MIS(n)=\left\{ \begin{array}{l l} 3^{n/3} & \textrm{if\/ $n\equiv 0 \mod 3$},\\ 4\cdot 3^{\lfloor n/3\rfloor-1} & \textrm{if\/ $n\equiv 1 \mod 3$},\\ 2\cdot 3^{\lfloor n/3\rfloor} & \textrm{if\/ $n\equiv 2 \mod 3$}.\\\end{array}\right.\end{displaymath} 

\end{theorem}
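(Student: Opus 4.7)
The plan is to prove Theorem~\ref{mm} by establishing matching lower and upper bounds.

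For the lower bound I would exhibit explicit graphs realizing the claimed values in each residue class. When $n \equiv 0 \pmod{3}$, the disjoint union of $n/3$ triangles has exactly $3^{n/3}$ maximal independent sets, one for each choice of a single vertex from each triangle. When $n \equiv 1 \pmod{3}$, replacing one triangle by a $K_4$ gives a factor of $4$ instead of $3$ on that component, yielding $4 \cdot 3^{\lfloor n/3 \rfloor - 1}$. When $n \equiv 2 \pmod{3}$, augmenting $\lfloor n/3 \rfloor$ disjoint triangles with a single extra edge contributes an extra factor of $2$.

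For the upper bound I would proceed by strong induction on $n$, with small cases checked directly. The key structural observation is that for any vertex $v$ of $G$, every maximal independent set $S$ must meet $N[v]$, since otherwise $S \cup \{v\}$ would still be independent. Choosing $v$ of minimum degree $\delta$ and partitioning the maximal independent sets according to which element of $N[v]$ they contain, I would derive
\[
MIS(G) \le MIS(G - N[v]) + \sum_{u \in N(v)} MIS(G - N[u]) \le (\delta + 1)\, MIS(n - \delta - 1),
\]
using that each $u \in N[v]$ has degree at least $\delta$ and thus $|G - N[u]| \le n - \delta - 1$. The inductive step then reduces to verifying $(\delta+1)\cdot MIS(n-\delta-1) \le MIS(n)$, in terms of the claimed formula, for every $\delta \ge 2$ and each residue of $n$ modulo $3$. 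The cases $\delta = 0,1$ require separate but elementary treatment: for $\delta = 0$ the isolated vertex $v$ lies in every maximal independent set, so $MIS(G) = MIS(G - v) \le MIS(n-1) \le MIS(n)$; for $\delta = 1$ the bound $MIS(G) \le 2\,MIS(n-2)$ combines with the formula to give the desired inequality in each residue class.

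The main obstacle is that the inductive inequality is tight exactly when $\delta = 2$ (for all three residues) and when $\delta = 3$ with $n \equiv 1 \pmod{3}$, precisely reflecting the two extremal families described above. This tightness means that any slack in the counting step---such as overlooking that $v \in N[u]$ for every $u \in N(v)$ and hence $|G - N[u]| \le n - \delta - 1$ rather than $n - \delta$---would yield the wrong constants. A careful residue-by-residue verification of the key inequality, together with direct computation on a handful of base cases where the piecewise formula for $MIS(n)$ first takes hold, then completes the argument.
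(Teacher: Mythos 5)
Your proposal is correct and takes essentially the route the paper relies on: Theorem~\ref{mm} is quoted there from Miller--Muller and Moon--Moser, and your inductive scheme (every maximal independent set meets $N[v]$, the bound $MIS(G)\le \sum_{u\in N[v]} MIS(G-N[u])\le(\delta+1)\,MIS(n-\delta-1)$ for $v$ of minimum degree, followed by a residue-by-residue verification, plus the matching triangle/$K_4$/$K_2$ constructions) is precisely the Chang--Jou/Wood proof that the paper adapts for its Theorem~\ref{thm1}. Two minor remarks: the inductive inequality is tight not only at $\delta=2$ and at $\delta=3$ with $n\equiv 1\pmod 3$, but also at $\delta=1$ when $n\equiv 1,2\pmod 3$ (reflecting the extremal graphs containing $K_2$ components), and the displayed formula should be read with the usual small-$n$ caveat (it fails at $n=1$), both of which your base-case and residue checks absorb without affecting the argument.
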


There are several natural generalizations of this problem to hypergraphs, especially $3$-uniform hypergraphs. Cory Palmer \cite{cory} explicitly asked the following question and inspired our research.

\smallskip

{\bf Problem.} Determine the maximum number of maximal independent sets in a $3$-uniform $n$-vertex hypergraph.

\smallskip

A generalization of Theorem~\ref{mm} to hypergraphs was studied by Tomescu \cite{tom} and by Lonc and Truszczy{\'n}ski~\cite{lt}. 
They showed that in $3$-uniform $n$-vertex hypergraphs the maximum number of maximal independent sets is between roughly $1.5849^n$ and $1.6702^n$.

As it is often the case with graph theoretical notions, independent sets have multiple generalizations to hypergraphs. A set $S$ of vertices in a hypergraph is \textit{strongly independent}, if every hyperedge shares at most one vertex with $S$. We denote by $MSIS(\mathcal{H})$ the number of maximal strongly independent sets in a hypergraph $\mathcal{H}$.
We show the following analogue of Theorem~\ref{mm} for $3$-graphs.

\begin{theorem}\label{thm1}
If $\mathcal{H}$ is a $3$-uniform hypergraph on $n$ vertices, then $MSIS(\mathcal{H}) \leq g(n)$, where
 \begin{displaymath} 
g(n) = \left\{ \begin{array}{l l} 3^{n/3} & \textrm{if\/ $n\equiv 0 \mod 3$},\\ 4\cdot 3^{\frac{n-4}{3}} & \textrm{if\/ $n\equiv 1 \mod 3$},\\ 16\cdot 3^{\frac{n-8}{3}} & \textrm{if\/ $n\equiv 2 \mod 3$}.\\\end{array}\right.\end{displaymath}
 Moreover, for all $n \geq 6$, there is a $3$-uniform $n$-vertex hypergraph $\mathcal{H}$ with $MSIS(\mathcal{H}) = g(n)$.
\end{theorem}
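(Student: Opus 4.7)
The plan is to translate the problem into graph theory and then handle it by induction. Given a $3$-uniform hypergraph $\mathcal{H}$, I would define its \emph{$2$-shadow} $G$ to be the graph on $V(\mathcal{H})$ in which $uv$ is an edge iff some hyperedge of $\mathcal{H}$ contains $\{u,v\}$. A set $S$ is strongly independent in $\mathcal{H}$ iff no hyperedge contains two vertices of $S$, iff $S$ is independent in $G$; moreover the ``maximality'' criterion transfers (a non-neighbor in $G$ of every element of $S$ is exactly a vertex one could add to $S$ without creating a hyperedge with two elements in $S$). Hence $MSIS(\mathcal{H})=MIS(G)$. Because every edge $uv$ of $G$ originates from some hyperedge $\{u,v,w\}$, the edges $uw,vw$ are also in $G$, so \emph{every edge of $G$ lies in a triangle}. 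Conversely, any graph with this property is a $2$-shadow of some $3$-uniform hypergraph (choose one triangle per edge). Thus the theorem is equivalent to: for every $n$-vertex graph $G$ in which every edge lies in a triangle, $MIS(G)\le g(n)$, and this bound is attained for $n\ge 6$.

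For the lower bound I would take disjoint unions of triangles and $K_4$'s. For $n\equiv 0\pmod 3$, $n/3$ triangles give $3^{n/3}$; for $n\equiv 1\pmod 3$, $(n-4)/3$ triangles plus one $K_4$ give $4\cdot 3^{(n-4)/3}$; and for $n\equiv 2\pmod 3$, $(n-8)/3$ triangles plus two vertex-disjoint $K_4$'s give $16\cdot 3^{(n-8)/3}$. Each construction has every edge in a triangle, and multiplicativity of $MIS$ across components together with $MIS(K_3)=3$ and $MIS(K_4)=4$ yields the stated values.

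For the upper bound I would induct on $n$, peeling off easy structure first. If $G$ contains an isolated vertex, an isolated triangle, or a connected component isomorphic to $K_4$, then $MIS(G)$ factors through the smaller graph with multipliers $1$, $3$, and $4$ respectively; the identities $g(n)\ge g(n-1)$, $g(n)=3g(n-3)$, and $g(n)\ge 4g(n-4)$ (the last with equality when $n\not\equiv 0\pmod 3$) close the induction in these cases. So we may assume $G$ has none of these. Since every edge lies in a triangle, $G$ has no vertex of degree $1$; and if every non-isolated vertex had degree $2$, each component would be a cycle whose edges lie in triangles, hence a $C_3$, contradicting the absence of isolated triangles. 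Thus some vertex $v$ has degree $\ge 3$, and I would partition maximal independent sets according to which vertex of $N[v]$ they contain, yielding
\[
MIS(G) \;\le\; MIS(G-N[v]) \;+\; \sum_{u\in N(v)} MIS(G-N[u]),
\]
and apply induction (or Theorem~\ref{mm}) to each summand, doing case analysis on $\deg(v)$ and the structure of $N[v]$.

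The main obstacle is that deleting $N[v]$ or $N[u]$ can destroy the property ``every edge in a triangle'' (the third vertex of some triangle may have been inside the deleted set), so the induction hypothesis does not immediately apply to the reduced graph; the only off-the-shelf fallback is the Moon--Moser bound from Theorem~\ref{mm}, which is strictly weaker than $g$ in the residue class $n\equiv 2\pmod 3$. Overcoming this gap will require either strengthening the induction so it covers all graphs that can arise after such deletions, or (more plausibly) a careful local analysis showing that whenever $G$ admits no trivial reduction, some vertex of small degree lies in an especially restricted substructure (a triangle sharing an edge with another triangle, or a $K_4$) that permits a structural reduction preserving the shadow property. Matching the slightly smaller value $16\cdot 3^{(n-8)/3}$ in the class $n\equiv 2\pmod 3$---in particular ruling out any $2$-shadow graph that would attain the full Moon--Moser value $2\cdot 3^{(n-2)/3}$---is the technical heart of the argument.
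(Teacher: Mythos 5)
Your reduction to the $2$-shadow and the constructions are fine, but the upper bound is not proved: you yourself flag the decisive gap, namely that the class of graphs in which every edge lies in a triangle is not closed under deleting closed neighborhoods, so after removing $N[u]$ you can no longer invoke your inductive hypothesis, and the only fallback, Theorem~\ref{mm}, gives $2\cdot 3^{(n-2)/3}$ in the residue class $n\equiv 2 \pmod 3$, which is strictly weaker than $g(n)=16\cdot 3^{(n-8)/3}$. The ``technical heart'' you defer is therefore still entirely open in your write-up, and the local analysis you sketch (finding a degree-$\ge 3$ vertex, triangles sharing edges, $K_4$'s) is not carried out and is not obviously sufficient.

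The missing idea is simply to run the induction in the hypergraph setting rather than in your shadow-graph class. The paper adapts the Chang--Jou/Wood proof directly to $3$-uniform hypergraphs: this class \emph{is} closed under taking induced subhypergraphs, so after choosing a vertex $x$ whose closed neighborhood $N[x]$ (in the hypergraph sense, which coincides with the closed neighborhood in your shadow graph) has minimum size $d$, the standard inequality $MSIS(\mathcal{H})\le \sum_{v\in N[x]} MSIS(\mathcal{H}-N[v])\le d\cdot g(n-d)$ can be combined immediately with the inductive hypothesis. The whole gain over Moon--Moser comes from one observation you never exploit: in a $3$-uniform hypergraph the minimum closed-neighborhood size can never be $2$ (it is $1$ for an isolated vertex and otherwise at least $3$), and then a short calculation shows $d\cdot g(n-d)\le g(n)$ for $d=1,3,4,5$, with larger $d$ reduced modulo $3$ via $(d-3)g(n-d+3)=3(d-3)g(n-d)>d\cdot g(n-d)$. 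In particular there is no need to preserve any ``every edge in a triangle'' property under deletion, no need to locate a high-degree vertex, and no separate structural case analysis; once you keep the hypergraph structure, the exclusion of $d=2$ is exactly what rules out the Moon--Moser extremal value $2\cdot 3^{(n-2)/3}$ in the class $n\equiv 2\pmod 3$.
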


One may consider intermediate independence notions, when a set shares a bounded number of vertices with any hyperedge. Instead, we consider the following generalization. Given an $r$-uniform hypergraph $\cH$ and a non-empty set $A$ of non-negative integers, we say that a set $S$ is an \textit{$A$-transversal} of $\cH$ if for any hyperedge $H$ of $\cH$, we have $|H\cap S| \in A$. Independent sets are $\{0,1,\dots,r{-}1\}$-transversals, while strongly independent sets are $\{0,1\}$-transversals. Note that for some sets $A$, there may exist hypergraphs without any $A$-transversals. For example, no non-bipartite graph contains a $\{1\}$-transversal. Similarly, say that an $r$-uniform hypergraph $\mathcal{H}$ is $k$-partite if $V(\mathcal{H})$ can be partitioned into $k$ strongly independent parts. Observe, no non-$r$-partite $r$-graph contains a $\{1\}$-transversal. However, we often consider $A$-transversals as generalizations of independent sets; in this case, typically it is natural to have $0\in A$, in which case $\emptyset$ is an $A$-transversal. 

We study the maximum number of $A$-transversals for every $A$, but we focus on the more natural sets, \ $A=\{a\}$, $A=\{0,1,\dots,a\}$ or $A$ being the set of odd integers or the set of even integers.

Observe that the point of studying maximal independent sets, rather than arbitrary independent sets, is that in the empty graph or empty hypergraph all the $2^n$ subsets are independent. We have the same problem for every $A$. However, it turns out, in some cases a more natural way to avoid this degenerate example is to assume that each vertex is contained in at least one (hyper)edge, i.e., there are no isolated vertices. Note that this is only possible if $n\ge r$; we omit this extra assumption later. Galvin \cite{galv} studied the maximum number of independent sets in graphs with minimum degree at least $\delta$ and showed that for sufficiently large $n$, $K_{\delta,n-\delta}$ contains the most independent sets. The threshold on $n$ was shown to be $2\delta$ in \cite{gls}.

Let us denote by $g_A^{(r)}(n)$ the maximum number of $A$-transversals in an $r$-uniform $n$-vertex hypergraph without isolated vertices. We denote by $h_A^{(r)}(n)$ the maximum number of maximal $A$-transversals in an $r$-uniform $n$-vertex hypergraph. We start by completely resolving the graph case.

\begin{theorem}\label{thm2}
All values stated in Table 1 hold.








\begin{table}[h!]\label{table}
\begin{center}
\setlength{\tabcolsep}{0.5em} 
{\renewcommand{\arraystretch}{1.7}
\begin{tabular}
{|c||c|c|c|c|c|c|c|}
 \hline
$A$  & $\{0\}$ & \{1\} & \{2\} & \{0,1\} & \{1,2\} & \{0,2\}  & \{0,1,2\} \\
 \hline
$g_A^{(2)}(n)$  & $1$    & $2^{\lfloor n/2\rfloor}$ & $1$ & $1+2^{n-1}$ & $1{+}2^{n-1}$  & $2^{\lfloor n/2\rfloor}$ & $2^n$\\
 \hline
$h_A^{(2)}(n)$      &  $1$    & $2^{\lfloor n/2\rfloor}$ & $1$  & 
$\begin{array}{c l} 3^{n/3} & \textrm{if\/ $n\equiv 0 \mod 3$},\\ 4\cdot 3^{\lfloor n/3\rfloor-1} & \textrm{if\/ $n\equiv 1 \mod 3$,}\\ 2\cdot 3^{\lfloor n/3\rfloor} & \textrm{if\/ $n\equiv 2 \mod 3$}.\end{array}$
& 1 & 1 & 1\\
\hline
\end{tabular}}
\caption{Values of $h$ and $g$ for graphs}
\end{center}
\end{table}
\end{theorem}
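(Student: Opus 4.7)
The plan is to handle each of the seven columns of the table separately, in each case first describing the structure of the $A$-transversals of a graph and then counting them.

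Four columns follow immediately from the definition. For $A=\{0\}$, no edge may meet $S$, so the no-isolated-vertex hypothesis forces $S=\emptyset$, which is also the unique maximal $A$-transversal. For $A=\{2\}$, every edge lies entirely in $S$, so $S=V$. For $A=\{0,1,2\}$ every subset is an $A$-transversal, giving $g=2^n$, and only $V$ is maximal. For $A=\{1,2\}$, a set $S$ is an $A$-transversal iff it is a vertex cover; hence $S\mapsto V\setminus S$ gives a bijection between vertex covers and independent sets, yielding $g_{\{1,2\}}^{(2)}(n)=g_{\{0,1\}}^{(2)}(n)$, while any superset of a vertex cover is a vertex cover, so $V$ is the unique maximal vertex cover and $h=1$.

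Next I address the two ``partition-type'' cases. For $A=\{0,2\}$, $S$ is an $A$-transversal iff its characteristic function is constant on every edge, iff $S$ is a union of connected components of $G$; with no isolated vertex there are at most $\lfloor n/2\rfloor$ components, giving $g=2^{\lfloor n/2\rfloor}$, attained by a matching (augmented by one $P_3$ when $n$ is odd). Since unions of components form a lattice with top $V$, we obtain $h=1$. For $A=\{1\}$, an $A$-transversal is exactly one class of a proper $2$-coloring on each component, so $G$ must be bipartite and the count is $2^{c(G)}$, maximized at $2^{\lfloor n/2\rfloor}$ by the same construction. Moreover, every $\{1\}$-transversal $S$ is already maximal: any $v\notin S$ has a neighbor $u$, the transversal condition forces $u\in S$, and then $|\{u,v\}\cap(S\cup\{v\})|=2\notin A$. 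Since isolated vertices lie in every maximal $\{1\}$-transversal, the maximum of $h$ is attained on graphs with no isolated vertex, whence $h=g=2^{\lfloor n/2\rfloor}$.

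Finally, for $A=\{0,1\}$ the transversals are precisely the independent sets of $G$. The value $h_{\{0,1\}}^{(2)}(n)$ coincides with the Moon--Moser count in Theorem~\ref{mm}, since maximal independent sets and maximal $\{0,1\}$-transversals are the same objects. For $g_{\{0,1\}}^{(2)}(n)=1+2^{n-1}$, the plan is to invoke Galvin's theorem \cite{galv} with the sharp threshold $n\ge 2\delta$ from \cite{gls}: for $\delta=1$ this covers all $n\ge 2$, and the extremal graph is $K_{1,n-1}$, whose independent sets are the $2^{n-1}$ subsets of the leaves together with the singleton center. This last bound is the main obstacle: a naive one-vertex recursion $I(G)=I(G-v)+I(G-N[v])$ only yields $I(G)\le 2^{n-1}+2^{n-1-\deg v}$, which is loose unless $\deg v=n-1$, so reaching the sharp $1+2^{n-1}$ bound genuinely requires Galvin's refined argument.
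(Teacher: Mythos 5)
Your handling of the columns $\{0\}$, $\{2\}$, $\{0,2\}$, $\{0,1,2\}$, $\{1,2\}$ and of $A=\{1\}$ is correct and essentially the same as the paper's: the paper routes the easy columns through Proposition~\ref{prop1}, uses the same complementation to identify $g_{\{0,1\}}^{(2)}$ with $g_{\{1,2\}}^{(2)}$, the same two-coloring/component count for $\{1\}$ (including the reduction to graphs without isolated vertices for $h_{\{1\}}$), and Theorem~\ref{mm} for $h_{\{0,1\}}^{(2)}$. The genuine divergence is the one nontrivial bound $g_{\{0,1\}}^{(2)}(n)=1+2^{n-1}$: you propose to invoke Galvin~\cite{galv} with the sharp threshold $n\ge 2\delta$ from~\cite{gls} at $\delta=1$, which the paper explicitly concedes does imply the statement for all $n\ge 2$; the paper, however, deliberately gives a short self-contained induction instead. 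Your route buys brevity at the cost of importing a substantially heavier external theorem, and your stated justification --- that the sharp bound ``genuinely requires Galvin's refined argument'' --- is not correct. The paper's elementary argument: in an $n$-vertex graph $G$ with no isolated vertices pick any $v$ and let $u$ be a neighbor of $v$ of minimum degree among the neighbors of $v$. If $\deg u\ge 2$, then $G-v$ still has no isolated vertices, so induction bounds the independent sets avoiding $v$ by $1+2^{n-2}$, while every independent set containing $v$ avoids $u$, giving at most $2^{n-2}$ more; the total is $1+2^{n-1}$. If no such $v,u$ exist, every vertex has only degree-one neighbors, so $G$ is a perfect matching and $3^{n/2}\le 1+2^{n-1}$. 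The point your ``naive recursion'' objection misses is that the $+1$ should enter the recursion only once, which is exactly what the careful choice of $u$ (keeping minimum degree $1$ after deleting $v$) achieves; the crude bound $I(G-v)\le 2^{n-1}$ is never used. A final nitpick: for $h_{\{0\}}^{(2)}(n)=1$ there is no isolated-vertex hypothesis, so the unique maximal $\{0\}$-transversal is the set of all isolated vertices rather than $\emptyset$; the value $1$ is of course unaffected.
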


We remark that Theorem \ref{mm} gives the value of $h_{\{0,1\}}^{(2)}(n)$. The only other non-trivial statement of Theorem \ref{thm2} is $g_{\{0,1\}}^{(2)}(n)=g_{\{1,2\}}^{(2)}(n)=1+2^{n-1}$. The above mentioned theorem of Galvin \cite{galv} implies this statement for $n\ge 14$, and the stronger result from \cite{gls} implies the statement for $n\ge 2$.
We shall present a simple proof.

Now we list our general results. We collect some simple observations in the following proposition. 

\begin{proposition}\label{prop1}
\textbf{(i)} $g_{\{0\}}^{(r)}(n)=h_{\{0\}}^{(r)}(n)=1$.

\textbf{(ii)} If $r\in A$, then $h_{A}^{(r)}(n)=1$.

\textbf{(iii)} $g_{\{r\}}^{(r)}(n)=1$.

\textbf{(iv)} $g_{\{0,1,\dots,r\}}^{(r)}(n)=2^n$.

\textbf{(v)} $g_{\{0,r\}}^{(r)}(n)=2^{\lfloor n/r\rfloor}$.

\textbf{(vi)} If $A\subset \{0,\dots,r\}$ and $B=\{r-a: a\in A\}$, then $g_{A}^{(r)}(n)=g_{B}^{(r)}(n)$.

\textbf{(vii)} $g_{\{a\}}^{(r)}(n)=h_{\{a\}}^{(r)}(n)$ for $n\ge r$.

\end{proposition}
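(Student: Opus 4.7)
The plan is to dispatch (i)--(iv) directly from the definitions, give short combinatorial arguments for (v) and (vi), and spend the real work on (vii). For (i), any $\{0\}$-transversal $S$ must avoid every non-isolated vertex; so in the no-isolated setting $S=\emptyset$ is the only option (giving $g=1$), and otherwise the unique maximal $\{0\}$-transversal is the set of all isolated vertices. For (ii), $V$ itself is an $A$-transversal whenever $r\in A$, and is therefore the unique maximal one. Dually, for (iii), $|H\cap S|=r$ forces $H\subseteq S$ for every edge $H$, and combined with no isolated vertices this forces $S=V$. For (iv), every subset trivially satisfies $|H\cap S|\in\{0,\ldots,r\}$, so all $2^n$ subsets are transversals (the bound is realized by any $r$-uniform hypergraph on $n\ge r$ vertices without isolated vertices).

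For (v), I would observe that a $\{0,r\}$-transversal $S$ is either empty or full on each edge, and then propagate through shared vertices to conclude that $S$ must be a union of connected components of the relation on $V(\mathcal{H})$ generated by ``two vertices share an edge''. Since no isolated vertex means each component contains at least one edge and hence at least $r$ vertices, there are at most $\lfloor n/r\rfloor$ components, giving $g_{\{0,r\}}^{(r)}(n)\le 2^{\lfloor n/r\rfloor}$. Equality is attained by taking $\lfloor n/r\rfloor$ vertex-disjoint edges and absorbing any leftover $s=n\bmod r$ vertices into one existing component by a single additional edge. For (vi), the complement map $S\mapsto V\setminus S$ bijects $A$-transversals with $B$-transversals since $|H\cap(V\setminus S)|=r-|H\cap S|$; the domain (hypergraphs with no isolated vertex) is unchanged, so the extremal counts must agree.

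For (vii), the direction $g\le h$ is immediate because, in a hypergraph without isolated vertices, every $\{a\}$-transversal $S$ is already maximal: adding any $v\notin S$ places $v$ in some edge $H$ and changes $|H\cap S|$ from $a$ to $a+1\ne a$. For $h\le g$, I would take an extremal $\mathcal{H}$ for $h_{\{a\}}^{(r)}(n)$, let $I$ be its set of isolated vertices, and note that each maximal $\{a\}$-transversal of $\mathcal{H}$ must contain all of $I$; hence these biject with $\{a\}$-transversals of $\mathcal{H}'=\mathcal{H}-I$, a hypergraph on $m=n-|I|$ vertices without isolated vertices. If $m=0$ the count is $1$, and $g_{\{a\}}^{(r)}(n)\ge 1$ is easy to verify directly for $n\ge r$. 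Otherwise $m\ge r$, and it suffices to prove the monotonicity $g_{\{a\}}^{(r)}(m)\le g_{\{a\}}^{(r)}(n)$.

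The main step will be proving this monotonicity by a one-vertex extension. Given any hypergraph on $m\ge r$ vertices without isolated vertex, pick any edge $e^*=\{u_1,\ldots,u_r\}$, introduce a new vertex $v$, and add the new edge $e'=\{v,u_1,\ldots,u_{r-1}\}$. For any $\{a\}$-transversal $S$ of the old hypergraph, the constraint $|S\cap e^*|=a$ gives $|S\cap\{u_1,\ldots,u_{r-1}\}|=a-[u_r\in S]$, and the new constraint $|S\cap e'|=a$ then forces $[v\in S]=[u_r\in S]$, a uniquely determined choice. So each old transversal extends to exactly one transversal of the enlarged hypergraph (and conversely every new transversal restricts), so the count is preserved; iterating from $m$ up to $n$ produces a hypergraph with no isolated vertex and the same transversal count, completing the proof. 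The hard part is essentially this bookkeeping on the one-vertex extension; every other item in the proposition follows essentially from the definitions.
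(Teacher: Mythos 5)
Your proof is correct, and for items (i)--(vi) it is essentially the paper's argument: the same definitional observations for (i)--(iv), the same ``union of connected components'' count for (v) (your extremal construction differs only cosmetically from the paper's), and the same complementation bijection for (vi). The genuine divergence is in (vii), in the direction $h_{\{a\}}^{(r)}(n)\le g_{\{a\}}^{(r)}(n)$. The paper keeps all $n$ vertices and repairs an extremal hypergraph for $h$ by adding hyperedges that absorb the isolated vertices: a disjoint hyperedge when $|I|\ge r$, and when $0<|I|<r$ a hyperedge through $I$ and $r-|I|$ vertices of an existing hyperedge, after which each maximal $\{a\}$-transversal is extended by a suitably sized subset of $I$; this requires a small case analysis and an injectivity check. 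You instead delete the isolated vertices, observe that maximal $\{a\}$-transversals of $\cH$ biject with $\{a\}$-transversals of $\cH-I$ (a hypergraph on $m=n-|I|$ vertices with no isolated vertices, where $m=0$ or $m\ge r$), and reduce everything to the monotonicity $g_{\{a\}}^{(r)}(m)\le g_{\{a\}}^{(r)}(n)$, which you prove by a pendant-edge extension: attach a new vertex $v$ through the edge $\{v,u_1,\dots,u_{r-1}\}$ hanging off an existing edge $\{u_1,\dots,u_r\}$, and note that the two constraints force $v\in S'$ exactly when $u_r\in S'$, so the number of $\{a\}$-transversals is preserved and no isolated vertex is created. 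Your route buys a case-free argument and a reusable monotonicity lemma for $g_{\{a\}}^{(r)}$; the paper's route avoids any monotonicity statement by working on $n$ vertices throughout. Both arguments (like the statement itself) implicitly assume $a\le r$, since otherwise $g_{\{a\}}^{(r)}(n)=0<1=h_{\{a\}}^{(r)}(n)$.
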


Finally, we list our more substantial statements for general $r$. Among other results, we determine the order of magnitude of the function $g_{A}^{(r)}(n)$ in every instance.

Let $A(i)$ denote the non-negative integers of the form $a-i$ with $a\in A$, i.e., $A(i)=\{a-i: a\in A\}\cap \mathbb{N}$. Let $f(q,i,A)=\sum_{b\in A(i)} \binom{q}{b}$. 
Observe that for a given $q$-set $Q$ there are $f(q,i,A)$ ways to pick a subset $H$ of $Q$ such that $|H|$ belong to $A(i)$. If we are given an $i$-set $X$ disjoint from $Q$, there are $f(q,i,A)$ ways to pick a subset $H$ of $Q$ such that $|H\cup X|\in A$.


\begin{theorem}\label{thm3}

\textbf{(i)} 
Let us assume $A$ and $r$ are given. We choose $q$ and $i$ with $q\le r$ and $0\le i\le r-q$ such that $f(q,i,A)^{1/q}$ is as large as possible. If the maximum is obtained at $q=p$,
then $g_{A}^{(r)}(n)=\Theta(f(p,i,A)^{n/p})$.

\textbf{(ii)} Let $A$ be the set of even integers and $B$ the set of odd integers. Then $g_{A}^{(r)}(n)=g_{B}^{(r)}(n)= 2^{\lfloor\frac{r-1}{r}n\rfloor}$.

\textbf{(iii)} Let $a<k<r$. Then $g_{\{0,1,\dots,a\}}^{(k)}(n)\ge g_{\{0,1,\dots,a\}}^{(r)}(n)\ge g_{\{0,1,\dots,a\}}^{(k)}(n-r+k)$
and  $h_{\{0,1,\dots,a\}}^{(k)}(n)\ge h_{\{0,1,\dots,a\}}^{(r)}(n)\ge h_{\{0,1,\dots,a\}}^{(k)}(n-r+k)$.
\end{theorem}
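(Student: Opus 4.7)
For part (i), the lower bound comes from an explicit construction. Take disjoint sets $X$, $Y$ of sizes $i$ and $r-p-i$ respectively, partition the remaining $n-r+p$ vertices into $t = \lfloor (n-r+p)/p \rfloor$ blocks $V_1, \dots, V_t$ of size $p$, and let $\mathcal{H}$ be the hypergraph with hyperedges $V_j \cup X \cup Y$ for $j = 1, \dots, t$. Every set of the form $X \cup T_1 \cup \cdots \cup T_t$ with $T_j \subseteq V_j$ and $|T_j| \in A(i)$ is an $A$-transversal of $\mathcal{H}$, giving $f(p,i,A)^t = \Omega(f(p,i,A)^{n/p})$ such transversals, and no vertex is isolated because $X \cup Y$ lies in every hyperedge. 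For the matching upper bound, I would aim to prove a recursion $g_A^{(r)}(n) \leq f(p,i,A) \cdot g_A^{(r)}(n-p)$ (up to a constant base case factor) for the optimal $(p,i)$. Fixing any hyperedge $E$ and a $p$-subset $Q \subseteq E$, for each restriction $S' = S \cap (V \setminus Q)$ the number of valid $S \cap Q$ is at most $f(p, |S' \cap (E \setminus Q)|, A) \leq f(p, i, A)$, since $(p,i)$ is chosen to maximize $f(p,\cdot,A)$. Iterating this recursion yields the matching $\Theta$-bound. The main obstacle is making the recursion rigorous, since the induced structure on $V \setminus Q$ does not cleanly inherit the ``no isolated vertex'' condition and one must account for edges of $\mathcal{H}$ that partially meet $Q$.

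For part (ii), I would use linear algebra over $\mathbb{F}_2$. Let $M$ be the $m \times n$ incidence matrix of $\mathcal{H}$. For $A$ the even integers, $S$ is an $A$-transversal iff $M \mathbf{1}_S = 0$, so the transversals form $\ker M$, with $2^{n - \mathrm{rank}(M)}$ elements; for $A$ the odd integers, they form an affine coset of the same size (if nonempty). The central step is the bound $\mathrm{rank}(M) \geq \lceil n/r \rceil$. Let $\mathbf{1}_{E_1}, \dots, \mathbf{1}_{E_k}$ be a basis of the row space of $M$, and set $U = E_1 \cup \cdots \cup E_k$, so $|U| \leq kr$. Every hyperedge, being a symmetric difference of basis edges, has support inside $U$; combined with the absence of isolated vertices this forces $U = V$, giving $n \leq kr$ and hence $k \geq \lceil n/r \rceil$. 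Therefore $g_A^{(r)}(n) \leq 2^{\lfloor (r-1)n/r \rfloor}$. The matching lower bound is a perfect matching of $\lfloor n/r \rfloor$ hyperedges when $r \mid n$; otherwise, with $n = qr + s$ and $1 \leq s \leq r-1$, replace one matching edge by a pair of hyperedges on $r+s$ vertices sharing exactly $r-s$ vertices, which yields rank $2$ instead of $1$ and contributes the extra factor $2^{s-1}$, precisely matching the floor formula.

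For part (iii), the first inequality comes from taking $k$-subsets. Given an $r$-uniform $\mathcal{H}$, let $\mathcal{H}'$ be the $k$-uniform hypergraph whose edges are all $k$-subsets of edges of $\mathcal{H}$. The $A$-transversals of $\mathcal{H}$ and $\mathcal{H}'$ coincide: the forward direction is trivial since $|E \cap S| \leq |H \cap S| \leq a$ for any $k$-subset $E$ of an edge $H$, while for the reverse, if $|H \cap S| \geq a+1$ then, since $a < k \leq r$, we may extend an $(a+1)$-subset of $H \cap S$ to a $k$-subset $E \subseteq H$ with $|E \cap S| \geq a+1$. Isolation is also preserved, so both $g$ and $h$ inequalities follow. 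For the second inequality, given a $k$-uniform $\mathcal{H}'$ on $n-r+k$ vertices, add $r-k$ fresh vertices $Z$ and replace each edge $E$ of $\mathcal{H}'$ by $E \cup Z$, producing an $r$-uniform $\mathcal{H}$ on $n$ vertices with no isolated vertex. Every $A$-transversal $T$ of $\mathcal{H}'$, viewed as a subset of $V(\mathcal{H})$ disjoint from $Z$, is also an $A$-transversal of $\mathcal{H}$, giving the $g$-inequality. For the $h$-inequality, the key observation is that any maximal $\{0,\dots,a\}$-transversal $T$ of $\mathcal{H}'$ admits an edge $E$ with $|E \cap T| = a$ (otherwise any $v \notin T$ could be added without violating the transversal property, contradicting maximality). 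This witness edge makes adding any $z \in Z$ to $T$ in $\mathcal{H}$ also violate the transversal property, since $|(E \cup Z) \cap (T \cup \{z\})| = |E \cap T| + 1 > a$, so $T$ remains maximal in $\mathcal{H}$, yielding the desired injection of maximal transversals.
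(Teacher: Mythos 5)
Your parts (ii) and (iii) are fine, but part (i) has a genuine gap, and it is exactly at the hard step. For the upper bound you propose the recursion $g_A^{(r)}(n)\le f(p,i,A)\cdot g_A^{(r)}(n-p)$ by deleting a $p$-subset $Q$ of one fixed hyperedge $E$; but $S\cap(V\setminus Q)$ is only an $A$-transversal of the subhypergraph of edges disjoint from $Q$, and every vertex that lies only in edges meeting $Q$ becomes isolated there, so the inductive quantity $g_A^{(r)}(n-p)$ (defined only for hypergraphs without isolated vertices) cannot be applied -- this is precisely the obstacle you name and leave unresolved, so the upper bound is not proved. The paper's key idea is a different deletion: take a vertex $v$ of minimum degree $d$, let $H_1,\dots,H_d$ be the edges through $v$, and let $Q$ be the set of vertices lying in \emph{no other} edge; deleting $Q$ together with $H_1,\dots,H_d$ leaves, by construction, a hypergraph $\cH'$ with no isolated vertices. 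The number of ways to extend an $A$-transversal of $\cH'$ back over $Q$ is then bounded using the constraint imposed by $H_1$ by $f(|Q|,i,A)$, and since $|Q|$ is not of fixed size, the induction is closed with the submultiplicativity $f(|Q|,i,A)\le f(p,i,A)^{|Q|/p}$, which comes from the extremal choice of $(p,i)$; your fixed-size recursion contains neither of these ingredients. Your lower-bound construction also has a (fixable) defect: when $p\nmid n-r+p$ the uncovered leftover vertices are isolated, so the hypergraph is not admissible for $g_A^{(r)}(n)$, and you assert the contrary. The paper repairs this by attaching a gadget on a set $R$ of at most $2r$ leftover vertices, whose edges are the $r$-subsets of $R$ containing a fixed $a$-set $R_0$ with $a\in A$, $a\ne r$ (the case $A=\{r\}$ being treated separately); $R_0$ then combines with the transversals of the main part.

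Part (ii) is correct and genuinely different from the paper: the paper proves the upper bound by the same minimum-degree deletion induction as in (i), whereas your $\mathbb{F}_2$ argument (transversals form $\ker M$ or a coset of it, and a row basis plus the no-isolated-vertex condition forces $\mathrm{rank}(M)\ge\lceil n/r\rceil$) is shorter, handles the odd and even cases simultaneously, and your lower-bound construction coincides with the paper's. Part (iii) follows the paper's route (the $k$-shadow in one direction, adding $r-k$ common vertices in the other); your maximality argument via a tight edge with $|E\cap T|=a$ is a clean alternative to the paper's extend-and-inject argument, needing only the trivial remark that it degenerates when the $k$-uniform hypergraph has no edges at all.
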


Using the $\Theta$ notation, we emphasize that $n$ goes to infinity, while $A$ and $r$ are considered fixed.

\section{Proofs}

Let us start with the proof of Theorem \ref{thm1}.
We adapt a simple inductive proof of Theorem \ref{mm} due to Chang and Jou \cite{cj} and independently Wood \cite{wood}.

\begin{proof}[Proof of {\bf Theorem \ref{thm1}}]

We can quickly verify that the theorem holds for $3$-uniform hypergraphs on $0,1,$ or $2$ vertices. Now, let $n \geq 3$, and assume that the theorem holds for all positive integers less than $n$. Let $\mathcal{H}$ be a $3$-uniform hypergraph on $n$ vertices. Let $N[v]$ denote the closed neighborhood of $v$, i.e., the set of vertices $u$ such that either $u = v$ or there is a hyperedge containing both $v$ and $u$. Let us choose $x \in V(\mathcal{H})$ with $|N[x]|$ minimal,
and define $d = |N[x]|$. Observe, any maximal strongly independent set $X$ of $\mathcal{H}$ intersects $N[x]$ at least once, since otherwise $X \cup \{x\}$ remains strongly independent and contains $X$ as a proper subset. Also observe that if $v \in N[x] \cap X$, then $X - \{v\}$ is a maximal strongly independent set in the induced subhypergraph on 
$V(\mathcal{H}) \setminus N[v]$. These observations together imply that 
$$\mathrm{MSIS}(\mathcal{H}) \leq \sum_{v \in N[x]} \mathrm{MSIS}(\mathcal{H} - N[v]),$$
since a maximal strongly independent set $X$ of $\mathcal{H}$ is counted $|X \cap N[x]|$ times by the right hand sum. 

Now, for any $v \in V(\mathcal{H})$, we have $\mathrm{MSIS}(\mathcal{H} - N[v]) \leq g(n - |N[v]|) \leq g(n - d)$, since $g$ is clearly a non-decreasing function and $d$ is the minimum size of a closed neighborhood in $\mathcal{H}$. So 

$$\mathrm{MSIS}(\mathcal{H}) \leq d \cdot g(n-d).$$

Now, we only need to show $d \cdot g(n-d) \leq g(n)$ regardless of the value of $d$. This is clearly true if $d = 1$; note also that, since $d$ represents the size of a closed neighborhood and $\mathcal{H}$ is $3$-uniform, we cannot have $d = 0$ or $d = 2$. 

When $d = 3$, note that we have $3g(n-3) = g(n)$ regardless of the value of $n$, so indeed, $3 \cdot g(n-3) \leq g(n)$.

When $d = 4$,  

 \begin{displaymath} 
4 g(n-4) = \left\{ \begin{array}{l l} \frac{64}{81} \cdot 3^{n/3} & \textrm{if\/ $n\equiv 0$ \textrm{mod} 3},\\ 4 \cdot 3^{\frac{n-4}{3}} & \textrm{if\/ $n\equiv 1$ \textrm{mod} 3},\\ 16 \cdot 3^{\frac{n-8}{3}} & \textrm{if\/ $n\equiv 2$ \textrm{mod} 3}.\\\end{array}\right.\end{displaymath}
We observe that in each case, $4g(n-4) \leq g(n)$.

When $d = 5$, 

 \begin{displaymath} 
5 g(n-5) = \left\{ \begin{array}{l l} \frac{20}{27} \cdot 3^{n/3} & \textrm{if\/ $n\equiv 0$ \textrm{mod} 3},\\ \frac{80}{27} \cdot 3^{\frac{n-4}{3}} & \textrm{if\/ $n\equiv 1$ \textrm{mod} 3},\\ 15 \cdot 3^{\frac{n-8}{3}} & \textrm{if\/ $n\equiv 2$ \textrm{mod} 3}.\\\end{array}\right.\end{displaymath}
Again, we observe in each case that $5g(n-5) \leq g(n)$.

Observe also that for any $d \geq 5$, 
$$(d-3)g(n-d+3) = 3(d-3)g(n-d) > d\cdot g(n-d).$$
Thus, if $d > 5$, we may repeatedly apply the above inequality to show that 
 \begin{displaymath} 
d\cdot g(n-d) \leq \left\{ \begin{array}{l l} 3 g(n-3) & \textrm{if\/ $d\equiv 0$ \textrm{mod} 3},\\ 4 g(n-4) & \textrm{if\/ $d\equiv 1$ \textrm{mod} 3},\\ 5 g(n-5)  & \textrm{if\/ $d\equiv 2$ \textrm{mod} 3}.\\\end{array}\right.\end{displaymath}

We conclude that for any $3$-uniform hypergraph $\mathcal{H}$ on $n$-vertices, we have $MSIS(\mathcal{H}) \leq g(n)$. It remains to describe $n$-vertex constructions achieving $g(n)$ maximal strongly independent sets.

When $n \equiv 0  \mod 3$, we take $\frac{n}{3}$ independent hyperedges. When $n \equiv 1 \mod 3$ (and $n \geq 4$), we take $\frac{n-4}{3}$ independent hyperedges and a copy of $K_4^3$. When $n \equiv 2 \mod 3$ (and $n \geq 8$), we take $\frac{n-8}{3}$ independent hyperedges and two disjoint copies of $K_4^3$.
\end{proof}

We continue by proving Proposition \ref{prop1}. Recall that it contains several simple observations.

\begin{proof}[Proof of {\bf Proposition \ref{prop1}}]
To prove \textbf{(i)}, observe that a $\{0\}$-transversal is a set of isolated vertices. There is only one such maximal set in any hypergraph, and when there are no isolated vertices, $\emptyset$ is the only $\{0\}$-transversal.

To prove \textbf{(ii)}, observe that the entire vertex set is an $A$-transversal. Thus, it is the only maximal one.

To prove \textbf{(iii)}, observe that an $\{r\}$-transversal contains all the hyperedges, and, as there are no isolated vertices in this case, it thus contains the entire vertex set. 

To prove \textbf{(iv)}, observe that every subset of vertices is a $\{0,1,\dots,r\}$-transversal.

To prove \textbf{(v)}, observe that for any connected component $C$ in a hypergraph $\cH$, a $\{0,r\}$-transversal either contains $C$ or avoids $C$. Therefore, the only choice to make is to pick some of the at most $\lfloor n/r\rfloor$ components, proving the upper bound. The lower bound is given by any hypergraph with $\lfloor n/r\rfloor$ components and no isolated vertices.
For instance, we can take $\lfloor n/r\rfloor-1$ vertex-disjoint hyperedges of size $r$, and a complete $r$-uniform hypergraph on the remaining vertices.

To prove \textbf{(vi)}, observe that the complement of an $A$-transversal is a $B$-transversal. Thus, a graph with $g_{A}^{(r)}(n)$ $A$-transversals contains at least that many $B$-transversals, showing $g_{A}^{(r)}(n)\le g_{B}^{(r)}(n)$. By symmetry, we also have $g_{B}^{(r)}(n)\le g_{A}^{(r)}(n)$, completing the proof.

To prove \textbf{(vii)}, observe first that if $S$ is an $\{a\}$-transversal and $v\not\in S$ such that $v$ is contained in a hyperedge, then $S\cup \{v\}$ is not an $\{a\}$-transversal. This means that in a hypergraph without isolated vertices, every $\{a\}$-transversal is maximal, thus by definition $g_{\{a\}}^{(r)}(n)\le h_{\{a\}}^{(r)}(n)$. 

Let $\cH$ be an $r$-uniform $n$-vertex hypergraph with $h_{\{a\}}^{(r)}(n)$ maximal $\{a\}$-transversals such that $\cH$ has the smallest number $i$ of isolated vertices among such hypergraphs. Let $I$ be the set of isolated vertices in $\cH$. 
If $i=0$, then we are done. If $i\ge r$, then we can add a hyperedge on $r$ isolated vertices to $\cH$ to obtain a new hypergraph $\cH'$. 
Now, the number of maximal $\{a\}$-transversals in $\cH'$ is at least the number of maximal $\{a\}$-transversals in $\cH$ (and will in fact be strictly larger if $0 < a < r$), but $\cH'$ has strictly fewer isolated vertices than $\cH$, a contradiction. 

If $0<i<r$, we pick a set $Q$ of $r-i$ vertices from an arbitrary hyperedge and add a new hyperedge of size $r$ containing the vertices of $Q$ and $I$. We obtain the hypergraph $\cH''$. 
Consider an $\{a\}$-transversal $S$ of $\cH$ that does not intersect $I$, and let $j:=a-|S\cap Q|\le a-r+i\le i$. Let $J$ be a $j$-element subset of $I$, then $S\cup J$ is an $\{a\}$-transversal of $\cH''$.

Observe that every maximal $\{a\}$-transversal $T$ of $\cH$ contains the vertices of $I$, and $T\setminus I$ is also an $\{a\}$-transversal of $\cH$. The set $(T\setminus I)\cup J$ is an $\{a\}$-transversal of $\cH'$, and for distinct maximal $\{a\}$-transversals of $\cH$, we obtain distinct $\{a\}$-transversals of $\cH''$ this way.
 This means that $\cH''$ contains at least as many $\{a\}$-transversals as the number of maximal $\{a\}$-transversals in $\cH$. Additionally, $\cH''$ contains no isolated vertices, completing the proof.
\end{proof}

We continue by proving Theorem \ref{thm2}, which deals with the graph case.

\begin{proof}[Proof of {\bf Theorem \ref{thm2}}]

We begin with the cases which follow quickly from established results. Observe that \textbf{(i)} of Proposition \ref{prop1} implies that $g_{\{0\}}^{(2)}(n) = h_{\{0\}}^{(2)}(n) = 1$. From \textbf{(ii)} of Proposition \ref{prop1}, we have $h_{\{2\}}^{(2)}(n) = h_{\{0,2\}}^{(2)}(n) = h_{\{1,2\}}^{(2)}(n) = h_{\{0,1,2\}}^{(2)}(n) = 1$. From \textbf{(iii)} of Proposition \ref{prop1}, we have $g_{\{2\}}^{(2)} = 1$. From \textbf{(iv)} of Proposition \ref{prop1}, we have $g_{\{0,1,2\}}^{(2)}(n) = 2^n$, and from \textbf{(v)} of Proposition \ref{prop1}, we have $g_{\{0,2\}}^{(2)}(n) = 2^{\lfloor n/2 \rfloor}$. Finally, recall that the value of $h_{\{0,1\}}^{(2)}(n)$ is given by Theorem \ref{mm}.

Now, we shall argue that $g_{\{1\}}^{(2)} = h_{\{1\}}^{(2)}(n) = 2^{\lfloor n/2 \rfloor}$. The lower bound is given by $\lfloor n/2\rfloor$ independent edges. We continue with the upper bound. Let $G$ be an $n$-vertex graph. Observe that for any $\{1\}$-transversal $S$ of $G$, the sets $S$ and $V(G)\setminus S$ form a proper 2-coloring of $G$. In particular, there are no $\{1\}$-transversals in non-bipartite graphs. There are two $\{1\}$-transversals in any bipartite connected component.
 
 If $G$ contains no isolated vertices, then there are at most $\lfloor n/2\rfloor$ connected components of $G$. Let $I$ be the set of isolated vertices in $G$. Now every maximal $\{1\}$-transversal $S$ contains $I$. Observe that $S\setminus I$ is a maximal $\{1\}$-transversal on the induced subgraph on $V(G)\setminus I$, which we denote by $G'$. Now $G'$ contains at least as many maximal $\{1\}$-transversals as $G$.
 
 As there are no isolated vertices in $V(G)\setminus I$, there are at most $\lfloor (n-|I|)/2\rfloor$ connected components in $G'$. Thus, there are at most $2^{\lfloor (n-|I|)/2\rfloor}\le 2^{\lfloor n/2\rfloor}$ maximal transversals in $G'$, completing the proof of the upper bound.

\smallskip

Finally, we argue that $g_{\{0,1\}}^{(2)}(n) = g_{\{1,2\}}^{(2)}(n) = 1 + 2^{n-1}$. Observe first that the complement of a $\{0,1\}$-transversal is a $\{1,2\}$-transversal, which proves that $g_{\{0,1\}}^{(2)}(n)=g_{\{1,2\}}^{(2)}(n)$. Note also that a $\{0,1\}$-transversal of a graph is an independent set; thus, we now simply determine the maximum number of independent sets in an $n$-vertex graph. The lower bound is given by the star $K_{1,n-1}$: the center is an independent set, and any subset of leaves is also an independent set. 

To prove the upper bound, we apply induction on $n$; the base step $n=2$ is trivial. Assume that the statement holds for $n{-1}$ and consider an $n$-vertex graph $G$ without isolated vertices. Let $v$ be a vertex of $G$, and let $u$ be a neighbor of $v$ with minimum degree among the neighbors of $v$. If $u$ has another neighbor in $G$, then we can apply the induction hypothesis on the graph $G'$ we obtain by deleting $v$ from $G$, since $G'$ does not have any isolated vertex. Thus, there are at most $1+2^{n-2}$ independent sets in $G'$. This is the number of independent sets in $G$ that do not contain $v$. The independent sets containing $v$ must avoid $u$, and thus there are at most $2^{n-2}$ such independent sets. Hence there are at most $1+2^{n-2}+2^{n-2}=1+2^{n-1}$ independent sets in $G$.

Observe that vertex $v$ was chosen arbitrarily, so we are done unless each vertex has only neighbors of degree 1, i.e., $G$ is a matching. In this case, $n$ is even and there are at most $3^{n/2}\le 1+2^{n-1}$ independent sets in $G$, completing the proof.
\end{proof}

Finally, we prove Theorem \ref{thm3}. Recall that \textbf{(i)} determines the order of magnitude of $g_A^r(n)$, \textbf{(ii)} deals with $A$ being the set of even (or odd) integers, and \textbf{(iii)} deals with $A=\{0,1,\dots,a\}$.

\begin{proof}[Proof of {\bf Theorem \ref{thm3}}]
The lower bound in \textbf{(i)} is trivial if $A=\emptyset$ or $A=\{r\}$. Otherwise, the lower bound is given by the following construction. 
We consider a set $U$ of $r-p\ge i$ vertices, and we select $\lfloor (n-2r+p)/p\rfloor$ vertex-disjoint $p$-sets of the remaining vertices. Let $\cH'$ denote the $p$-uniform hypergraph having the $p$-sets as hyperedges. Let $\cH_1$ denote the $r$-uniform hypergraph with hyperedges of the form $U\cup H$ with $H\in \cH'$.
Observe that we can extend $\cH_1$ to an $n$-vertex hypergraph by adding a set $R$ of vertices with $r\le |R|\le r+p\le 2r$.
We pick $a\in A$ with $a\neq r$ and an $a$-set $R_0$ in $R$, and the hyperedges of the $r$-uniform hypergraph $\cH_2$ are the $r$-subsets of $R$, which contain $R_0$. Let $\cH$ be the union of $\cH_1$ and $\cH_2$. Thus, $\cH$ has no isolated vertices.

Observe that $R_0$ is an $A$-transversal of $\cH_2$.
We extend $R_0$ by $A$-transversals of $\cH_1$ to obtain $A$-transversals of $\cH$.
Consider the $A$-transversals of $\cH_1$,
which contain a given $i$-subset of $U$. These $A$-transversals intersect any $p$-set $P$ of $\cH'$ in an element of $A(i)$. Let us construct such an A-transversal $S$ of $\cH_1$.
For each hyperedge of $\cH'$, there are $f(p,i,A)$ ways to select the intersection $P\cap S$. We can select the vertices from distinct hyperedges of $\cH'$ independently.
No matter how we select these intersections, if each intersection is in $A(i)$, then their union together with the chosen subset of $U$
forms an $A$-transversal of $\cH_1$. This shows that
there are at least $f(p,i,A)^{\lfloor (n-2r+p)/p\rfloor}$ 
$A$-transversals in $\cH_1$. The union of any of those $A$-transversals with $R_0$ forms an $A$-transversal in $\cH$, completing the proof of the lower bound.

Let us continue by proving the upper bound $g_{A}^{(r)}(n)\le c f(p,i,A)^{n/p}$ for some $c$ that depends on $A$ and $r$ but not on $n$. We apply induction on $n$; the base cases are covered by the option of picking $c$ large enough. Let $\cH$ be an $r$-uniform $n$-vertex hypergraph without isolated vertices. Let $v$ be a vertex of minimum degree $d\ge 1$ and let $H_1,\dots,H_d$ be the hyperedges containing $v$. Let $Q$ denote the set of vertices that are contained in $H_1\cup\cdots\cup H_d$ and no other hyperedges. Thus $v\in Q$. 

Now let us delete $Q$ and $H_1,\dots,H_d$, and let $\cH'$ be the resulting hypergraph. Observe that there are no isolated vertices in $\cH'$, as each vertex not in $Q$ is contained in another hyperedge. 
Thus we can apply the induction hypothesis to obtain that there are at most $c f(p,i,A)^{(n-|Q|)/p}$ $A$-transversals in $\cH'$.

Observe if $S$ is an $A$-transversal in $\cH$, then $S\setminus Q$ is an $A$-transversal in $\cH'$. Given $S\setminus Q$, we know the cardinality $i$ of $(S\setminus Q)\cap H_1$. Next, if $S$ is an $A$-transversal in $\cH$, then the number of vertices in $(S\cap Q)\cap H_1$ is an element of $A(i)$. This implies that for each $A$-transversal $S\setminus Q$ of $\cH'$, there are at most $f(|(S\cap Q)\cap H_1|,i,A)\le f(|Q|,i,A)$ ways to extend $S\setminus Q$ to an $A$-transversal in $\cH$. (Note that there may be fewer than that many ways if $d>1$ and some $H_j$ satisfies $|(S\setminus Q)\cap H_j|\neq i$.) Thus there are at most \[c f(|Q|,i,A)f(p,i,A)^{(n-|Q|)/p}=cf(|Q|,i,A) f(p,i,A)^{n/p}/f(p,i,A)^{|Q|/p}\] $A$-transversals in $\cH$. We have $f(|Q|,i,A)\le f(p,i,A)^{|Q|/p}$ by the definition of $p$, completing the proof.

To prove \textbf{(ii)}, let $n=ar+b$ with $b<r$, so the stated bound becomes $2^{a(r-1)+b-1}$. 
Let us show the lower bound. If $b=0$, then $n/r$ independent hyperedges give the lower bound, since there are $2^{r-1}$ ways to pick a subset of even/odd cardinality from any hyperedge. If $b>0$, we pick $(a-1)$ independent edges, and two edges $H$ and $H'$ covering together $r+b$ vertices, thus sharing $r-b$ vertices. There are $2^{r-1}$ ways to pick a subset of even/odd cardinality from $H$, and it determines whether odd or even many vertices are needed to be selected from $H'\setminus H$ so that the resulting set intersects $H'$ in an element of $A$ (or $B$). There are $2^{b-1}$ ways to pick such a set from $H'\setminus H$, completing the proof of the lower bound.

To prove the upper bound, we apply induction on $n$. 
The base case $n=r$ is trivial. Now we proceed as in the proof of the upper bound in \textbf{(i)}, defining $Q$ and $\cH'$ the same way. 
The inductive hypothesis implies there are at most $2^{\left\lfloor\frac{r-1}{r}(n-|Q|)\right\rfloor}$ $A$-transversals and $B$-transversals in $\cH'$. As in \textbf{(i)}, we know the cardinality $i$ of $(S\setminus Q)\cap H_1$. Thus, we know the number of vertices we add from $Q\cap H_1$ is a member of $A(i)$ or $B(i)$. Depending on the parity of $i$, we either need to select an odd number, or we need to select an even number of vertices from $Q\cap H_1$ to obtain an $A$-transversal (or $B$-transversal) in $\cH$.

In both cases, there are at most $2^{|Q\cap H_1|-1}\le 2^{|Q|-1}$ ways to select the correct number of vertices, thus there are at most \[2^{\lfloor\frac{r-1}{r}(n-|Q|)\rfloor+|Q|-1}=2^{(a-1)(r-1)+\lfloor\frac{r-1}{r}(r+b-|Q|)\rfloor+|Q|-1}\] $A$-transversals ($B$-transversals) in $\cH$. Observe that $\frac{r-1}{r}(r+b-|Q|)<r+b-|Q|$, thus $\lfloor\frac{r-1}{r}(r+b-|Q|)\rfloor\le r+b-|Q|-1$. Therefore, the obtained upper bound is at most \[2^{(a-1)(r-1)+r+b-|Q|-1+|Q|-1}=2^{a(r-1)+b-1},\] completing the proof.

\smallskip

To prove \textbf{(iii)}, let $\cH_1$ be an $r$-uniform $n$-vertex hypergraph and let $\cH_1'$ be its $k$-shadow, i.e., the $k$-uniform hypergraph where a $k$-set $H$ is a hyperedge if and only if $H$ is contained in a hyperedge of $\cH_1$. Let $S$ be a $\{0,1,\dots,a\}$-transversal in $\cH_1$. We claim that $S$ is also a $\{0,1,\dots,a\}$-transversal in $\cH_1'$. If it is not, i.e., if a hyperedge $H$ of $\cH_1'$ 
shares more than $a$ elements with $S$, then any hyperedge of $\cH_1$ containing $H$ also shares more than $a$ elements with $S$. This shows that $g_{\{0,1,\dots,a\}}^{(r)}(n)\le g_{\{0,1,\dots,a\}}^{(k)}(n)$.

If $S$ is a maximal $\{0,1,\dots,a\}$-transversal in $\cH_1$, then we claim that $S$ is also a maximal $\{0,1,\dots,a\}$-transversal in $\cH_1'$. If it is not, then for some $s\not\in S$, $S\cup \{s\}$ is also a $\{0,1,\dots,a\}$-transversal in $\cH_1'$. Observe that $S\cup \{s\}$ is not a $\{0,1,\dots,a\}$-transversal in $\cH_1$, thus it shares at least $a+1$ vertices with a hyperedge of $\cH_1$. Some hyperedge of $\cH_1'$ contains these $a+1$ vertices, thus shares $a+1$ vertices with $S\cup \{s\}$, a contradiction to the assumption that $S\cup \{s\}$ is a $\{0,1,\dots,a\}$-transversal in $\cH_1'$. This shows that $h_{\{0,1,\dots,a\}}^{(r)}(n)\le h_{\{0,1,\dots,a\}}^{(k)}(n)$.

Let $\cH_2$ be a $k$-uniform $(n-r+k)$-vertex hypergraph and let $\cH_2'$ be the $r$-uniform $n$-vertex hypergraph we obtain by adding the same $r-k$ new vertices to each hyperedge of $\cH_2$. If $S$ is a $\{0,1,\dots,a\}$-transversal in $\cH_2$, then clearly $S$ is also a $\{0,1,\dots,a\}$-transversal in $\cH_2'$. This shows that $g_{\{0,1,\dots,a\}}^{(r)}(n)\ge g_{\{0,1,\dots,a\}}^{(k)}(n-r+k)$.

If $S$ is a maximal $\{0,1,\dots,a\}$-transversal in $\cH_2$, then it is not necessarily maximal in $\cH_2'$, as some of the $r-k$ new elements may be added to $S$. However, let $S_1$ and $S_2$ be two distinct maximal $\{0,1,\dots,a\}$-transversals in $\cH_2$, and let $S_i'$ be a maximal $\{0,1,\dots,a\}$-transversal in $\cH_2'$ that contains $S_i$. We claim that $S_1'\neq S_2'$. Observe that there exists $s_1\in S_1\setminus S_2$ and $s_2\in S_2\setminus S_1$ by the maximality of these two transversals. Now $s_1\in S_1'\setminus S_2'$ and $s_2\in S_2'\setminus S_1'$, thus $S_1'\neq S_2'$. This shows that $h_{\{0,1,\dots,a\}}^{(r)}(n)\ge h_{\{0,1,\dots,a\}}^{(k)}(n-r+k)$.
\end{proof}

\section{Concluding remarks}

One could avoid the degenerate example of the empty (hyper)graph by different assumptions instead of having no isolated vertices. We have mentioned that in the graph case, the maximum number of independent sets in graphs with minimum degree at least $\delta>1$ was studied. Another possible assumption is that the host hypergraph is connected. The order of magnitude is determined by \textbf{(i)} of Theorem \ref{thm3} in several cases. In the remaining cases, the construction is mostly a matching. It is easy to see from our proof that similar statements hold in these cases; we only need to strengthen the assumption $q\le r$ to $q<r$ when defining $f(q,i,A)$.
Note that the largest number of \emph{maximal} independent sets in connected graphs has been determined in \cite{fur} and \cite{ggg}.

One may consider these problems for non-uniform hypergraphs as well. For the function $g$ and $1\in A$, one should assume that the hyperedges contain more than 1 vertex, as there are $2^n$ independent sets in the hypergraph consisting of $n$ hyperedges of size 1. An even better assumption is that each hyperedge contains more vertices than the largest element of $A$, as there are $2^n$ $\{0,1,\dots,a\}$-transversals among all the $k$-sets for each $k\le a$. We remark that the proof of Theorem \ref{thm3} \textbf{(iii)} extends to the non-uniform setting, and thus it is enough to determine the number of (maximal) $\{0,1,\dots,a\}$-transversals in $(a+1)$-uniform families in order to determine the order of magnitude of the analogous quantity in non-uniform families.

Finite projective planes provide prototype examples of (intersecting) $r$-uniform hypergraphs.  
In that context, a much-studied concept is the $k$-sets of $(m,n)$-type \cite{vito}.
This means $A$ consists of two non-zero elements $m$ and $n$ and the ground-set has $k$ elements.
Some values lead to beautiful constructions and some constructions lead to applications in coding theory. 
This shows that the existence problem for two-element sets $A$ has useful applications.



\begin{thebibliography}{99} 

\bibitem{cj} G.J. Chang, M.J. Jou. Survey on counting maximal independent sets. In Proceedings of the Second Asian Mathematical Conference (1995) 265--275.

\bibitem{fur} Z. F\"uredi. The number of maximal independent sets in connected graphs. J. Graph Theory 11(4) (1987) 463--470.

\bibitem{gls} W. Gan, P.-S. Loh, B. Sudakov. Maximizing the number of independent sets of a fixed size. Combinatorics, Probability and Computing 24(3) (2015) 521--527.

\bibitem{ggg} J.M. Griggs, C.M. Grinstead, D.R. Guichard. The number of maximal independent sets in a connected graph. Discrete Mathematics 68(2-3) (1988)
211--220.

\bibitem{galv} D. Galvin. Two problems on independent sets in graphs. Discrete Mathematics 311(20) (2011) 2105--2112.



\bibitem{lt} Z. Lonc, M. Truszczy{\'n}ski. On the number of minimal transversals in 3-uniform hypergraphs.
Discrete Mathematics 308(16) (2008) 3668--3687.

\bibitem{mimu} R.E. Miller, D.E. Muller. A problem of maximum consistent
subsets. IBM Research Report RC-240, J.T. Watson Research Center,
New York, USA (1960).

\bibitem{momo} J.W. Moon, L. Moser. On cliques in graphs. Israel J. Math. 3 (1965) 23--28.

\bibitem{vito} V. Napolitano. Two-character sets in finite linear spaces.
Contributions to Discrete Mathematics  3(1) (2007) 123--133.

\bibitem{nie} J.M. Nielsen. On the number of maximal independent sets in a graph. BRICS Report Series
9(15) (2002).

\bibitem{cory} C. Palmer. Problem booklet of the 11th Eml\'ekt\'abla Workshop held in 2022.\\
\url{https://www.renyi.hu/~emlektab/emlektabla11.pdf}

\bibitem{tom} I. Tomescu. Le nombre maximum de cliques et de recouvrements par cliques des hypergraphes chromatiques complets. Discrete Mathematics 37(2-3)
(1981) 263--277.

\bibitem{wood} D.R. Wood. On the number of maximal independent sets in a graph. Discrete Maths. \& Theoretical Computer Science 13(3) (2011) 17--20.

\end{thebibliography}
\end{document}